\def\NZQ{\mathbb}               
\def\QQ{{\NZQ Q}}
\def\ZZ{{\NZQ Z}}
\def\frk{\mathfrak}               
\def\Phi{{\frk N}}
\def\opn#1#2{\def#1{\operatorname{#2}}} 
\opn\chara{char} \opn\length{\ell} \opn\pd{pd} \opn\rk{rk}
\opn\projdim{proj\,dim} \opn\injdim{inj\,dim} \opn\rank{rank}
\opn\depth{depth} \opn\grade{grade} \opn\height{height}
\opn\embdim{emb\,dim} \opn\codim{codim}
\opn\Tr{Tr} \opn\bigrank{big\,rank}
\opn\superheight{superheight}\opn\lcm{lcm}
\opn\trdeg{tr\,deg}
\opn\reg{reg} \opn\lreg{lreg} \opn\ini{in} \opn\lpd{lpd}
\opn\size{size}\opn{\mult}{mult}
\opn\div{div} \opn\Div{Div} \opn\cl{cl} \opn\Cl{Cl}
\opn\Spec{Spec} \opn\Supp{Supp} \opn\supp{supp} \opn\Sing{Sing}
\opn\Ass{Ass} \opn\Min{Min}
\opn\Ann{Ann} \opn\Rad{Rad} \opn\Soc{Soc}
\opn\Syz{Syz} \opn\Im{Im} \opn\Ker{Ker} \opn\Coker{Coker}
\opn\Am{Am} \opn\Hom{Hom} \opn\Tor{Tor} \opn\Ext{Ext}
\opn\End{End} \opn\Aut{Aut} \opn\id{id}
\opn\nat{nat}
\opn\pff{pf}
\opn\Pf{Pf} \opn\GL{GL} \opn\SL{SL} \opn\mod{mod} \opn\ord{ord}
\opn\Gin{Gin}
\opn\Hilb{Hilb}\opn\adeg{adeg}\opn\std{std}\opn\ip{infpt}
\opn\Pol{Pol}
\opn\sat{sat}
\opn\Var{Var}
\opn \ann{ann}
\opn\aff{aff} \opn\con{conv} \opn\relint{relint} \opn\st{st}
\opn\lk{lk} \opn\cn{cn} \opn\core{core} \opn\vol{vol}
\opn\link{link} \opn\star{star}
\opn\gr{gr}
\def\pot#1#2{#1[\kern-0.28ex[#2]\kern-0.28ex]}
\opn\dirlim{\underrightarrow{\lim}}
\opn\inivlim{\underleftarrow{\lim}}
\let\To=\longrightarrow
\def\Implies{\ifmmode\Longrightarrow \else
        \unskip${}\Longrightarrow{}$\ignorespaces\fi}
\def\implies{\ifmmode\Rightarrow \else
        \unskip${}\Rightarrow{}$\ignorespaces\fi}
\def\iff{\ifmmode\Longleftrightarrow \else
        \unskip${}\Longleftrightarrow{}$\ignorespaces\fi}
\theoremstyle{plain}
\newtheorem{Theorem}{Theorem}[section]
\newtheorem{Lemma}[Theorem]{Lemma}
\newtheorem{Corollary}[Theorem]{Corollary}
\newtheorem{Proposition}[Theorem]{Proposition}
\newtheorem{Remark}[Theorem]{Remark}
\newtheorem{Conjecture}[Theorem]{Conjecture}
\let\epsilon\varepsilon
\let\phi=\varphi
\let\kappa=\varkappa
\def\qed{\ifhmode\textqed\fi
      \ifmmode\ifinner\quad\qedsymbol\else\dispqed\fi\fi}
\def\textqed{\unskip\nobreak\penalty50
       \hskip2em\hbox{}\nobreak\hfil\qedsymbol
       \parfillskip=0pt \finalhyphendemerits=0}
\def\dispqed{\rlap{\qquad\qedsymbol}}
\opn\dis{dis}
\def\pnt{{\raise0.5mm\hbox{\large\bf.}}}
\opn\Lex{Lex}
\begin{document}

\title{Bounds for Hilbert coefficients}

\author{J\"urgen Herzog and Xinxian Zheng}

\address{J\"urgen Herzog, Fachbereich Mathematik und
Informatik, Universit\"at Duisburg-Essen, Campus Essen, 45117
Essen, Germany} \email{juergen.herzog@uni-essen.de}

\address{Xinxian Zheng, Fachbereich Mathematik und
Informatik, Universit\"at Duisburg-Essen, Campus Essen, 45117
Essen, Germany} \email{xinxian.zheng@uni-essen.de}

\thanks{The second author is grateful for the financial support by DFG (Deutsche Forschungsgemeinschaft) during the preparation of this work}
\subjclass{13H15,  13D40, 13D02}
\keywords{Hilbert coefficients, pure resolutions, multiplicity}

\begin{abstract}
We compute the Hilbert coefficients of a graded module with pure resolution and discuss lower  and upper  bounds for these coefficients for arbitrary graded modules.
\end{abstract}
\maketitle
\section*{Introduction}
Let $K$ be a field, $S=K[x_1,\ldots, x_n]$ the polynomial ring in
$n$ variables,  and let $N$ be any graded $S$-module of dimension
$d$. Then for $i\gg 0$, the numerical function $H(N,i)=\sum_{j\leq
i}\dim_KN_j$ is a polynomial function of degree $d$, see \cite[4.1.6]{BH}.
In other words, there exists a polynomial $P_N(x)\in \ZZ[x]$ such
that
\[
H(N,i)=P_N(i) \quad \text{for all}\quad i\gg 0.
\]
The polynomial $P_N(x)$ is called the {\em Hilbert polynomial} of $N$.
It can be
written in the form

\begin{eqnarray*}
\label{formula}
P_N(x)=\sum_{i=0}^{d}(-1)^ie_{i}(N)\binom{x+d-i}{d-i}
\end{eqnarray*}
with integer coefficients $e_i(N)$, called the {\em Hilbert
coefficients} of $N$.

In the first section we will give explicit formulas for the $e_i(N)$
in case $N$ has a pure resolution. In the second section we use this
result and a conjecture of Boij and S\"oderberg \cite{BS} to get
conjectural lower and upper bounds for the Hilbert coefficients.  We
also discuss a few cases for which these  bounds hold. These bounds
generalize the conjectured bounds for the multiplicity, due to
Huneke, Srinivasan and the first author of this paper, see
\cite{HS}. A rather complete  survey of the multiplicity conjecture
can be found in \cite{HZ}. For more recent results we refer to
\cite{tim}, \cite{welker}, \cite{novik} and \cite{HiSi}.

\section{The Hilbert coefficients of a module with pure resolution}

Let $K$ be a field and $S=K[x_1,\ldots, x_n]$ the polynomial ring in $n$ variables, and let $N$ be a finitely generated graded $S$-module. We say $N$ has a pure resolution of type $(d_0,d_1,\ldots, d_s)$ if its minimal graded free $S$-resolution is of the form
\[
0\To S^{ \beta_s}(-d_{s})\To\cdots\To S^{\beta_1}(-d_{1})\To S^{\beta_0}(-d_0) \To 0.
\]
The main result of this section is

\begin{Theorem}
\label{main}
 Let $N$ be  a finitely generated graded Cohen-Macaulay $S$-module of codimension $s$ with pure resolution of type $(d_0,d_1,\ldots, d_s)$ with $d_0=0$. Then the Hilbert coefficients of $N$ are
\[
e_i(N)=\beta_0\frac{\prod_{j=1}^sd_j}{(s+i)!}\sum_{1\leq j_1\leq j_2\cdots \leq j_i\leq s}\; \prod_{k=1}^i(d_{j_k}-(j_k+k-1)), \quad i=0,\ldots, n-s.
\]
\end{Theorem}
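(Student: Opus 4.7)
The plan is to express $e_i(N)$ via the $h$-polynomial of $N$, rewrite it through the Taylor expansion at $t=1$ of the numerator of the Hilbert series, apply the Herzog-K\"uhl formulas for the Betti numbers, and finally identify the resulting divided difference with the combinatorial sum in the statement.

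From the pure resolution the Hilbert series is $H_N(t) = Q(t)/(1-t)^n$ with $Q(t) = \sum_{j=0}^s(-1)^j\beta_j t^{d_j}$. Cohen-Macaulayness of codimension $s$ is equivalent to $(1-t)^s\mid Q(t)$, so $h(t):=Q(t)/(1-t)^s$ is the $h$-polynomial of $N$ and $H_N(t)=h(t)/(1-t)^d$ with $d=n-s$. Expanding the cumulative Hilbert series $h(t)/(1-t)^{d+1}$ in powers of $(1-t)$ yields the standard identification $e_i(N)=h^{(i)}(1)/i!$. Since $(1-t)^s\mid Q(t)$ amounts to $Q^{(k)}(1)=0$ for $0\leq k<s$, the Taylor expansion of $Q$ at $t=1$ begins in degree $s$, and dividing by $(1-t)^s$ gives
\[
e_i(N) \;=\; \frac{(-1)^s\,Q^{(i+s)}(1)}{(i+s)!} \;=\; \frac{(-1)^s}{(i+s)!}\sum_{j=0}^s(-1)^j\beta_j\,d_j(d_j-1)\cdots(d_j-i-s+1).
\]

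The same vanishing conditions are the Herzog-K\"uhl equations: they force $(-1)^j\beta_j=C/\prod_{k\neq j}(d_j-d_k)$ for a common constant $C$, and the case $j=0$ with $d_0=0$ determines $C=(-1)^s\beta_0\prod_{k=1}^s d_k$. The $j=0$ term of the sum above vanishes because $d_0=0$ makes the falling factorial zero (for $s\geq 1$), and for $j\geq 1$ one factor of $d_j$ in $\prod d_k$ cancels the corresponding $d_j-d_0=d_j$ in the denominator, leaving
\[
e_i(N) \;=\; \frac{\beta_0\prod_{k=1}^s d_k}{(i+s)!}\,\sum_{j=1}^s\frac{(d_j-1)(d_j-2)\cdots(d_j-i-s+1)}{\prod_{k=1,\,k\neq j}^s (d_j-d_k)}.
\]
The sum on the right is precisely the divided difference $f[d_1,\ldots,d_s]$ of $f(x)=(x-1)(x-2)\cdots(x-(i+s-1))$, a polynomial of degree $i+s-1$.

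The theorem thus reduces to the combinatorial identity
\[
f[d_1,\ldots,d_s] \;=\; \sum_{1\leq j_1\leq\cdots\leq j_i\leq s}\prod_{k=1}^i\bigl(d_{j_k}-(j_k+k-1)\bigr),
\]
and this is the main obstacle. My plan is induction on $s$ via the recurrence $f[d_1,\ldots,d_s]=(f[d_2,\ldots,d_s]-f[d_1,\ldots,d_{s-1}])/(d_s-d_1)$, matched against a corresponding splitting of the right-hand side; the bijection $(j_1,\ldots,j_i)\mapsto(j_1,j_2+1,\ldots,j_i+i-1)$ between weakly increasing sequences in $\{1,\ldots,s\}$ and $i$-element subsets of $\{1,\ldots,s+i-1\}$ makes the splitting natural, but lowering $s$ forces the exponent $i$ to shift, so the factors $j_k+k-1$ must be realigned carefully when invoking the inductive hypothesis. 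The base cases $s=1$ (where $f[d_1]=(d_1-1)(d_1-2)\cdots(d_1-i)$ is the only term in the sum) and $i=0$ (where both sides equal $1$) are immediate; everything outside this combinatorial step is routine Hilbert-series bookkeeping.
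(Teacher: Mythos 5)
Your Hilbert-series bookkeeping and your use of the Herzog--K\"uhl formulas are correct, and up to that point your route is a genuine streamlining of the paper's argument: the paper computes the quantities $a_k=\sum_j(-1)^j\beta_jd_j^{k+s}$ by Cramer's rule and a Vandermonde-with-one-row-skipped determinant (Lemma~\ref{expansion}), whereas you bypass that linear algebra by writing $(-1)^j\beta_j=C/\prod_{k\neq j}(d_j-d_k)$ with $C=(-1)^s\beta_0\prod_{k=1}^sd_k$ and recognizing the remaining sum as the divided difference $f[d_1,\ldots,d_s]$ of $f(x)=(x-1)(x-2)\cdots(x-(i+s-1))$. (One small imprecision: $(1-t)^s\mid Q(t)$ is not \emph{equivalent} to Cohen--Macaulayness of codimension $s$; all you need, and all you use, is that $\dim N=n-s$, so the pole order of the Hilbert series at $t=1$ is $n-s$ and the first $s$ Taylor coefficients of $Q$ at $t=1$ vanish.)

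The genuine gap is the identity $f[d_1,\ldots,d_s]=\sum_{1\leq j_1\leq\cdots\leq j_i\leq s}\prod_{k=1}^i(d_{j_k}-(j_k+k-1))$, which you yourself call the main obstacle and for which you offer only a plan, not a proof. This step is not routine: it is exactly where the content of the paper's Lemma~\ref{expansion} (the divided difference of $x^m$ on the $s$ nodes $d_1,\ldots,d_s$ equals $f_{m-s+1}(d_1,\ldots,d_s)$, and vanishes for $m<s-1$) together with the paper's final expansion step is concentrated. Moreover, your sketched induction has the difficulty you already flag but do not resolve: in the recurrence $f[d_1,\ldots,d_s]=(f[d_2,\ldots,d_s]-f[d_1,\ldots,d_{s-1}])/(d_s-d_1)$ the polynomial $f$ is tied to the number $i+s-1$, so shrinking the node set changes which $f$ the inductive hypothesis refers to, and the factors $j_k+k-1$ on the combinatorial side do not split along the recurrence in any obvious way. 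As written, the proof is therefore incomplete. The identity is in fact true (your two checks $s=1$ and $i\leq 1$ extend; it is the standard divided-difference expression for a factorial complete homogeneous symmetric function), and the cleanest completion inside your framework is to expand $f$ monomially, use linearity of divided differences together with $x^m[d_1,\ldots,d_s]=f_{m-s+1}(d_1,\ldots,d_s)$, and then verify the combinatorial expansion $\sum_{1\leq j_1\leq\cdots\leq j_i\leq s}\prod_{k=1}^i(d_{j_k}-(j_k+k-1))=\sum_{j=0}^i(-1)^{i-j}g_{i-j}(s+i-1)f_j(d_1,\ldots,d_s)$ --- that is, essentially the two steps the paper itself carries out; none of that work disappears in your reformulation.
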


\begin{proof}
We first recall a few facts about Hilbert series and multiplicities as described in \cite{BH}. The Hilbert series $H_N(t)=\sum_iH(N,i)t^i$ is a rational function of the form
\[
H_N(t)=\frac{Q_N(t)}{(1-t)^{d+1}},
\]
where $d=n-s$ is the dimension of $N$. The Hilbert coefficients $e_i=e_i(N)$ of $N$ can be computed according to the formula
\[
e_i=\frac{Q_N^{(i)}(1)}{i!}, \quad i=0,\ldots,d.
\]
On the other hand by using the additivity of Hilbert functions, the free resolution of $N$ yields the presentation
\[
H_N(t)=\frac{P_N(t)}{(1-t)^{n+1}} \quad \text{with}  \quad P_N(t) = \sum_{j=0}^s(-1)^j\beta_jt^{d_j}.
\]
Thus we see that $P_N(t)=Q_N(t)(1-t)^s$. This yields
\begin{eqnarray}
\label{ei}
e_i=(-1)^s\frac{P_N^{(s+i)}(1)}{(s+i)!},  \quad i=0,\ldots, d.
\end{eqnarray}
For any two integers $0\leq a\leq b$ we set
\[
g_a(b)=\sum_{1\leq i_1<i_2<\cdots <i_a\leq b}i_1i_2\cdots i_a.
\]
Then we have
\begin{eqnarray*}
P_N^{(s+i)}(1)&=&\sum_{j=0}^s(-1)^j\beta_j\prod_{k=0}^{s+i-1}(d_j-k)\\
&=& \sum_{j=0}^s(-1)^j\beta_j\sum_{k=0}^{s+i}(-1)^{s+i-k}g_{s+i-k}(s+i-1)d_j^k\\
&=& \sum_{k=0}^{s+i}(-1)^{s+i-k}g_{s+i-k}(s+i-1)\sum_{j=0}^s(-1)^j\beta_jd_j^k.
\end{eqnarray*}
Hence if we set $a_{k}=\sum_{j=0}^s(-1)^j\beta_jd_j^{k+s}$ for all $k\geq 0$ and observe that $\sum_{j=0}^s(-1)^j\beta_jd_j^{k}=0$ for all $k<s$ (see \cite{HS} where the proof of this fact is given in the cyclic case), we obtain together with (\ref{ei}) the following identities
\begin{eqnarray}
\label{identities}
(-1)^s(s+i)!e_i= \sum_{k=0}^i(-1)^{i-k}g_{i-k}(s+i-1)a_k, \quad i=0,\ldots, d.
\end{eqnarray}
\medskip
\noindent
In order to compute the $a_i$ we consider for each $i$  the following matrix
\[
B_i=
\begin{pmatrix}
\beta_1d_1 & \beta_2d_2 & \cdots & \beta_sd_s\\
\beta_1d_1^2 & \beta_2d_2^2 & \cdots & \beta_sd_s^2\\
\vdots & \vdots & & \vdots\\
\beta_1d_1^{s-1} & \beta_2d_2^{s-1} & \cdots & \beta_sd_s^{s-1}\\
\beta_1d_1^{s+i} & \beta_2d_2^{s+i} & \cdots & \beta_sd_s^{s+i}
\end{pmatrix}.
\]
Replacing the last column of $B_i$ by the alternating sum of its columns we obtain the matrix $B_i'$ for which $\det B_i'=(-1)^s\det B_i$ and whose last columns is the transpose of $(0,0,\ldots, a_i)$.
It follows that
\begin{eqnarray}
\label{ai}
a_i=(-1)^s\det B_i/\det C,
\end{eqnarray} where
\[
C=
\begin{pmatrix}
\beta_1d_1 & \beta_2d_2 & \cdots & \beta_{s-1}d_{s-1}\\
\beta_1d_1^2 & \beta_2d_2^2 & \cdots & \beta_{s-1}d_{s-1}^2\\
\vdots & \vdots & & \vdots\\
\beta_1d_1^{s-1} & \beta_2d_2^{s-1} & \cdots & \beta_{s-1}d_{s-1}^{s-1}
\end{pmatrix}.
\]
Note that $\det N=\beta_1\cdots \beta_{s-1}d_1\cdots d_{s-1}\det V(d_1,\cdots, d_{s-1})$, where $V(d_1,\cdots, d_{s-1})$ is the Vandermonde matrix for the sequence $d_1,d_2,\ldots, d_{s-1}$. Hence we obtain
\[
\det C= \beta_1\cdots \beta_{s-1}d_1\cdots d_{s-1}\prod_{1\leq i<j\leq s-1}(d_j-d_i).
\]
On the other hand we have
\[
\det B_i= \beta_1\cdots \beta_sd_1\cdots d_s \det
\begin{pmatrix} 1 & 1& \cdots & 1\\
d_1 & d_2 & \cdots  & d_s\\
\vdots & \vdots && \vdots \\
d_1^{s-2} & d_2^{s-2} & \cdots & d_s^{s-2}\\
d_1^{s+i-1} & d_2^{s+i-1} & \cdots & d_s^{s+i-1}
\end{pmatrix}.
\]
According to the subsequent Lemma \ref{expansion} we have
\[
\det \begin{pmatrix} 1 & 1& \cdots & 1\\
d_1 & d_2 & \cdots  & d_s\\
\vdots & \vdots && \vdots \\
d_1^{s-2} & d_2^{s-2} & \cdots & d_s^{s-2}\\
d_1^{s+i-1} & d_2^{s+i-1} & \cdots & d_s^{s+i-1}
\end{pmatrix}
=f_{i}(d_1,\ldots, d_s)\cdot\prod_{1\leq j<k\leq s}(d_k-d_j) ,
\]
where for each integer $k\geq 0$ we set
\[
f_k(g_1,\ldots,g_s)=\sum g_1^{c_1}\cdots g_s^{c_s}.
\]
Here the sum is taken over all integer vectors $c=(c_1,\ldots,c_s)$ with $c_i\geq 0$ for all $i$ and $|c|=\sum_{i=1}^sc_i=k$.

Thus by (\ref{ai}) we have
\[
a_i= (-1)^s\beta_sd_sf_i(d_1,\ldots,d_s)\prod_{j=1}^{s-1}(d_s-d_j).
\]
Now we use that fact that $\beta_s=\beta_0\prod_{j=1}^{s-1}d_j/\prod_{j=1}^{s-1}(d_s-d_j)$ (see \cite{HK} or \cite{BS}) and we obtain
\[
a_i=(-1)^s\beta_0d_1\cdots d_s f_i(d_1,\ldots,d_s).
\]
This result together with (\ref{identities}) yields the  formulas
\begin{eqnarray}
\label{nice}
e_i=\beta_0\frac{d_1\cdots d_s}{(s+i)!}\sum_{j=0}^i(-1)^{i-j}g_{i-j}(s+i-1)f_j(d_1,\ldots,d_s).
\end{eqnarray}
Expanding the products in the following sum
\[
\sum_{1\leq j_1\leq j_2\cdots \leq j_i\leq s}\; \prod_{k=1}^i(d_{j_k}-(j_k+k-1))
\]
yields
\[
\sum_{1\leq j_1\leq j_2\cdots \leq j_i\leq s}\; \prod_{k=1}^i(d_{j_k}-(j_k+k-1))=\sum_{j=0}^i(-1)^{i-j}g_{i-j}(s+i-1)f_j(d_1,\ldots,d_s).
\]
Hence the desired formulas for the $e_i$ follow from (\ref{nice}).
\end{proof}

It remains to prove

\begin{Lemma}
\label{expansion}
For all $k\geq s-1\geq 0$ one has
\[
\det
\begin{pmatrix}1& \ldots & 1\\
d_1 &\ldots & d_s\\
\vdots && \vdots \\
d_1^{s-2} &\ldots & d_s^{s-2}\\
d_1^k &\ldots & d_s^k\\
\end{pmatrix} =f_{k-s+1}(d_1,\ldots, d_s)\cdot \prod_{1\leq i<j\leq s}(d_j-d_i).
\]
\end{Lemma}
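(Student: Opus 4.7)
The plan is to argue by induction on $k$. Write $V_k$ for the determinant on the left-hand side and $V=\prod_{1\le i<j\le s}(d_j-d_i)$ for the Vandermonde determinant. The base case $k=s-1$ is immediate, since then the matrix \emph{is} the Vandermonde matrix, so $V_{s-1}=V$, and $f_0=1$. For $0\le k\le s-2$ the last row duplicates an earlier row, so $V_k=0$, which matches the convention $f_{k-s+1}=0$ for negative index.

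For the inductive step I would exploit the fact that $d_1,\ldots,d_s$ are the roots of their own characteristic polynomial
\[
\prod_{j=1}^{s}(t-d_j)=t^s-e_1t^{s-1}+e_2t^{s-2}-\cdots+(-1)^s e_s,
\]
where $e_i=e_i(d_1,\ldots,d_s)$ is the $i$-th elementary symmetric polynomial. Substituting $t=d_j$ and multiplying by $d_j^{k-s}$ yields $d_j^k=e_1d_j^{k-1}-e_2d_j^{k-2}+\cdots+(-1)^{s-1}e_sd_j^{k-s}$ for each $j$ and each $k\ge s$. By multilinearity of the determinant in the last row, this translates directly to the recursion
\[
V_k=e_1V_{k-1}-e_2V_{k-2}+\cdots+(-1)^{s-1}e_sV_{k-s}\qquad (k\ge s).
\]

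On the symmetric-function side, the classical generating-function identity
\[
\Bigl(\sum_{m\ge 0}f_m t^m\Bigr)\Bigl(\sum_{i=0}^{s}(-1)^i e_i t^i\Bigr)=1
\]
yields, upon extracting the coefficient of $t^m$ for $m\ge 1$, the analogous recursion $f_m=e_1f_{m-1}-e_2f_{m-2}+\cdots+(-1)^{s-1}e_sf_{m-s}$, with the convention $f_m=0$ for $m<0$. Setting $m=k-s+1$, the sequences $V_k/V$ and $f_{k-s+1}(d_1,\ldots,d_s)$ satisfy the same $s$-term linear recursion and agree on the initial segment $0\le k\le s-1$, so they coincide for all $k\ge s-1$. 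The only real bookkeeping point is lining up signs and initial conditions between the two recursions, but once they are written down side by side the identification is forced, and no further computation is required.
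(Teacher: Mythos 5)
Your proof is correct, but it takes a genuinely different route from the paper's. You identify both sides as solutions of the same $s$-term linear recursion: substituting $d_j$ into $\prod_{j}(t-d_j)$ gives $d_j^k=e_1d_j^{k-1}-\cdots+(-1)^{s-1}e_sd_j^{k-s}$ for $k\ge s$, and linearity of the determinant in its last row yields $V_k=e_1V_{k-1}-\cdots+(-1)^{s-1}e_sV_{k-s}$, while the generating-function identity $\bigl(\sum_{m\ge 0} f_mt^m\bigr)\bigl(\sum_{i=0}^s(-1)^ie_it^i\bigr)=1$ gives the same recursion for the complete homogeneous polynomials; together with the base cases $V_k=0$ for $0\le k\le s-2$ (repeated row) and $V_{s-1}=V$, induction on $k$ closes the argument. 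The paper instead works entirely inside the determinant: using the identity $f_\ell(d_1,\ldots,d_{r-1},d_{r-1+j})-f_\ell(d_1,\ldots,d_{r-1},d_r)=(d_{r-1+j}-d_r)\,f_{\ell-1}(d_1,\ldots,d_r,d_{r-1+j})$ it subtracts the first column from the others, extracts the factors $(d_{r+1}-d_r)\cdots(d_s-d_r)$, and descends through a chain of matrices $A_r^{(k)}$ of shrinking size until only the $1\times 1$ entry $f_{k-s+1}(d_1,\ldots,d_s)$ remains. One small point in your write-up: dividing by $V$ presupposes that the $d_i$ are distinct; this holds in the paper's application ($d_1<\cdots<d_s$), and you can in any case avoid the division by comparing $V_k$ directly with $f_{k-s+1}\cdot V$, both of which satisfy the recursion and the same initial values, so the statement holds even as a polynomial identity. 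In effect your argument is the classical derivation of the bialternant formula for the one-row Schur polynomial ($s_{(m)}=h_m$), whereas the paper's proof is a self-contained elementary column manipulation; both are complete.
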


\begin{proof}
Given integers  $1\leq r\leq s$ and $k\geq s$ we define the matrix
\[
A_r^{(k)}=(a_{ij}^{(k)})_{i=1,\ldots, s-r+1\atop j=1,\ldots, s-r+1}
\]
with
\[
a_{ij}^{(k)}= \left\{\begin{array}{ll}
f_{i-1}(d_1,\ldots, d_{r-1}, d_{r-1+j}),& \text{for $i\leq s-r$, $j=1,\ldots, s-r+1$}\\
f_{k-r+1}(d_1,\ldots, d_{r-1}, d_{r-1+j}),& \text{for $i= s-r+1$, $j=1,\ldots, s-r+1$}.
\end{array} \right.
\]
Notice that $A_1^{(k)}$ is the matrix whose determinant we want to compute, while $A^{(k)}_s$
is the $1\times 1$-matrix with entry $f_{k-s+1}(d_1,\ldots,d_{s-1},d_s)$.

Next observe that for each integer $\ell>0$ and all $j>1$ one has
\[
f_\ell(d_1,\ldots, d_{r-1},d_{r-1+j})-f_{\ell}(d_1,\cdots, d_{r-1},d_r)=(d_{r-1+j}-d_r)\cdot f_{\ell-1}(d_1,\ldots, d_{r},d_{r-1+j}).
\]
Hence if subtract the first column from the other columns of $A_r^{(k)}$ and expand then this new matrix with respect to the first row (which is $(1,0,\cdots,0)$) we see that
\[
\det A_r^{(k)}=(d_{r+1}-d_r)(d_{r+2}-d_r)\cdots (d_s-d_r)\det A^{(k)}_{r+1}.
\]
Form this we obtain that
\[
\det A_1^{(k)}=\det A_s^{(k)}\cdot \prod_{1\leq i<j\leq s}(d_j-d_i)=f_{k-s+1}(d_1,\ldots,d_{s-1},d_s)\cdot \prod_{1\leq i<j\leq s}(d_j-d_i),
\]
as desired.
\end{proof}

For $i=0,1,2$ the formulas for the Hilbert coefficients read as follows:

\medskip
$e_0(N)=\beta_0\frac{\prod_{i=1}^sd_i}{s!}$;

\medskip
$e_1(N)=\beta_0\frac{\prod_{i=1}^sd_i}{(s+1)!}\sum_{i=1}^s(d_i-i)$;

\medskip
$e_2(N)=\beta_0\frac{\prod_{i=1}^sd_i}{(s+2)!}\sum_{1\leq i \leq j \leq s}(d_i-i)(d_j-j-1)$.

\medskip
In the special case that $N$ has a $d$-linear resolution, our formulas yield
\[
e_i(N)=\beta_0 {d+s-1 \choose s+i}{s+i-1\choose i}.
\]

\medskip

\begin{Remark}
\label{remark}
{\em The assumption made in Theorem \ref{main} that $d_0$ should be zero,  is not essential. It is only made to simplify the formulas for the Hilbert coefficients. While for the multiplicity we have $e_0(N)=e_0(N(a))$ for any shift $a$, the other Hilbert coefficients transform as follows:  if $N$ has a pure resolution of type $(d_0,d_1,\ldots, d_s)$, then $N(d_0)$ has pure resolution of type $(0, d_1-d_0,\ldots, d_s-d_0)$ whose Hilbert coefficient we know by Theorem~\ref{main}.

On the other hand we have $P_{N}(x)=P_{N(d_0)}(x+d_0)$,  from which one deduces that
\[
e_i(N)=\sum_{j=0}^i{d_0-1+i-j\choose d_0-1}e_{i}(N(d_0)) \quad \text{for} \quad i=0,\ldots, n-s.
\]
}
\end{Remark}

\section{Upper and lower bounds}

Given a sequence $d_1,d_2, \ldots,d_s$ of integers. We set
\[
h_i(d_1,\ldots, d_s)= \sum_{1\leq j_1\leq j_2\cdots \leq j_i\leq s}\; \prod_{k=1}^i(d_{j_k}-(j_k+k-1))
\]
for $i=0,\ldots, n-s$. This definition will  simplify notation in the following discussions.

Let $N$ be any finitely generated graded Cohen-Macaulay  $S$-module of projective dimension $s$ and graded Betti numbers $\beta_{ij}$. For each $i=1,\ldots s$, the minimal and maximal shifts of $N$ in homological degree $i$ are defined by  $m_i=\min\{j\: \; \beta_{ij}\neq 0\}$ and $M_i=\max\{j\: \; \beta_{ij}\neq 0\}$.

In case $N$ is generated in degree $0$ and  has a pure resolution of type $(d_1,\ldots,d_s)$, we have $m_i=M_i=d_i$ for all $i$, and Theorem \ref{main} tells us that
\[
e_i(N)=\beta_0\frac{d_1d_2\cdots d_s}{(s+i)!}h_i(d_1,\ldots,d_s) \quad \text{for $i=0,1,\ldots, n-s$}.
\]
In analogy to the so-called multiplicity conjecture we now state
\begin{Conjecture}
\label{1}
Let $N$ be a finitely generated graded Cohen-Macaulay $S$-module of codimension $s$ generated in degree $0$. Then
\[
\beta_0\frac{m_1m_2\cdots m_s}{(s+i)!}h_i(m_1,\ldots,m_s)\leq e_i(N)\leq \beta_0\frac{M_1M_2\cdots M_s}{(s+i)!}h_i(M_1,\ldots,M_s)
\]
for $i=0,1,\ldots, n-s$.
\end{Conjecture}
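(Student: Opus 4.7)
The plan is to reduce the conjecture to a combinatorial monotonicity statement via the conjecture of Boij and S\"oderberg (BS), to which the authors already appeal for motivation. Assuming BS, the Betti diagram of $N$ admits a positive rational decomposition
\[
\beta(N) = \sum_\alpha c_\alpha \pi_\alpha,
\]
where each $\pi_\alpha$ is the Betti diagram of a pure resolution of type $(0, d_1^{(\alpha)}, \ldots, d_s^{(\alpha)})$, the $c_\alpha$ are positive rationals, and the degree sequences appearing form a chain in the componentwise order. Since $N$ is generated in degree $0$, the $(0,j)$-entries force $d_0^{(\alpha)} = 0$ for every $\alpha$, and in general $m_j \leq d_j^{(\alpha)} \leq M_j$.

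Since the Hilbert coefficients $e_i$ are linear functions of the Betti diagram (via the formula $H_N(t) = P_N(t)/(1-t)^{n+1}$ used in the proof of Theorem~\ref{main}), linearity gives $e_i(N) = \sum_\alpha c_\alpha e_i(\pi_\alpha)$. Theorem~\ref{main} then yields the explicit expression
\[
e_i(\pi_\alpha) = \beta_0^{(\alpha)} \cdot \frac{d_1^{(\alpha)} \cdots d_s^{(\alpha)}}{(s+i)!} \cdot h_i(d_1^{(\alpha)}, \ldots, d_s^{(\alpha)}),
\]
where $\beta_0^{(\alpha)}$ denotes the first Betti number of $\pi_\alpha$. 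Reading off the $(0,0)$-entry of the decomposition gives the crucial identity $\sum_\alpha c_\alpha \beta_0^{(\alpha)} = \beta_0$.

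The conjecture then reduces to the combinatorial claim that the function
\[
F(d_1, \ldots, d_s) := d_1 d_2 \cdots d_s \cdot h_i(d_1, \ldots, d_s)
\]
is monotone non-decreasing on the poset of strictly increasing integer sequences with $d_j \geq j$. Granting this, $F(m_1, \ldots, m_s) \leq F(d_1^{(\alpha)}, \ldots, d_s^{(\alpha)}) \leq F(M_1, \ldots, M_s)$ for every $\alpha$; multiplying by the non-negative weights $c_\alpha \beta_0^{(\alpha)}/(s+i)!$ and summing then yields the asserted upper and lower bounds.

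The main obstacle is the monotonicity of $F$. Because the factors $d_{j_k} - (j_k + k - 1)$ in the definition of $h_i$ can be negative, monotonicity is not visible term by term, and one must regroup. A natural strategy is to compute $\partial F / \partial d_\ell$ as a polynomial in the $d_j$ and establish its non-negativity by exploiting the constraints $d_j \geq j$ and $d_1 < d_2 < \cdots < d_s$. The cases $i = 0, 1, 2$ are straightforward from the explicit formulas displayed after Lemma~\ref{expansion}, which gives encouraging evidence. If full monotonicity proves elusive, the fallback is to isolate special classes (for instance codimension two, quasi-pure resolutions, or componentwise linear modules) where the chain of pure diagrams is short enough to verify the bounds directly, recovering unconditional versions of the conjecture in those cases.
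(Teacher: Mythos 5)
Your reduction is exactly the one the paper carries out in Proposition \ref{justify}: assume Conjecture \ref{BS}, write the (normalized) Betti diagram of $N$ as a convex combination of pure diagrams $\pi(d)$ with $m_j\le d_j\le M_j$, use linearity of the $e_i$ in the Betti diagram together with Theorem \ref{main} to get $e_i(N)=\beta_0\sum_{\pi(d)}c_{\pi(d)}e_i(\pi(d))$, and then conclude from monotonicity of $e_i(\pi(d))$ in $d$. The problem is that you leave that monotonicity unproved: you correctly identify it as the main obstacle, but a plan to ``compute $\partial F/\partial d_\ell$ and establish non-negativity,'' with a fallback to special classes, is not an argument, so your proposal is incomplete at precisely its only nontrivial point. (Also note that this route, like the paper's, only shows that Boij--S\"oderberg implies the bounds; unconditional statements are available only where Conjecture \ref{BS} is known, e.g.\ codimension two or Gorenstein of codimension three. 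And the ``chain in the componentwise order'' you invoke is not part of the conjecture as used here; all that is needed is $m_j\le d_j^{(\alpha)}\le M_j$, which membership in $V_{m,M}$ already gives.)

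The missing step is fillable, and more simply than your worry that ``monotonicity is not visible term by term'' suggests: no regrouping or calculus is needed. The paper's key claim is that for a strictly increasing sequence $0=d_0<d_1<\cdots<d_s$ (so $d_j\ge j$), every summand $\prod_{k=1}^i\bigl(d_{j_k}-(j_k+k-1)\bigr)$ of $h_i(d)$ is either $0$ or has all factors strictly positive. Indeed, if the product is nonzero then $d_{j_1}-j_1>0$; if some factor were negative, take the smallest $\ell$ with $d_{j_\ell}-(j_\ell+\ell-1)<0$, note $\ell>1$ and (by minimality and nonvanishing) $d_{j_{\ell-1}}-(j_{\ell-1}+\ell-2)>0$, and subtract to get $j_\ell-j_{\ell-1}\ge d_{j_\ell}-d_{j_{\ell-1}}+1$, contradicting $d_1<\cdots<d_s$ with $j_{\ell-1}\le j_\ell$. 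Granting the claim, each summand of $h_i$ is a nonnegative and componentwise nondecreasing function of $d$ (a zero summand can only increase, and in a positive summand each factor weakly increases and stays positive), hence $d_1\cdots d_s\,h_i(d_1,\ldots,d_s)$ is monotone on the relevant sequences; this is inequality (\ref{eipi}) in the paper, and together with your weighted-sum bookkeeping it completes the conditional proof. Your explicit check for $i=0,1,2$ is evidence, not a proof, and the partial-derivative route over the real region would in any case be harder than this one-line combinatorial argument.
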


Next we recall a conjecture of Boij and S\"oderberg \cite{BS}: for any strictly increasing sequences of integers
  $m=(m_0,\dots,m_s)$ and $M =
  (M_0,\dots,M_s)$, let $V_{m,M}$ be the vector space over the
  rational numbers of all matrices $(\beta_{i,j})$ such that:
  \begin{enumerate}
  \item[(a)] $\beta_{i,j}$ is a solution to the system of linear
    equations
    \[ \label{vSpaceEq} \left\{
    \begin{array}{lcc}
      \sum_{i,j} (-1)^i \beta_{i,j} &=& 0 \\
      \sum_{i,j} (-1)^i j\beta_{i,j} &=& 0 \\
      & \vdots & \\
      \sum_{i,j} (-1)^i j^{s-1}\beta_{i,j} &=& 0
    \end{array}\right.
\]

  \item[(b)] $\beta_{i,j}=0$ whenever $j<m_i$ or $j>M_i$ (or $i <
    0$ or $i>s$).
  \end{enumerate}

Note that the graded Betti numbers of any graded Cohen-Macaulay module $N$ of codimension $s$ satisfies condition (a). Moreover if the maximal and minimal shifts of $N$ are the numbers $m_i$ and $M_i$, then this Betti diagram belongs to $V_{m,M}$. The set of Betti diagrams in $V_{m,M}$ is denoted by $B_{m,M}$. It is an additively closed subset of $V_{m,M}$.

To each $(\beta_{ij})\in B_{m,M}$ we assign the normalized Betti diagram $(\bar{\beta}_{ij})=(\beta_{ij}/\beta_0)$ and define the subset $\bar{B}_{m,M}=\{(\bar{\beta}_{ij})\:\; (\beta_{ij})\in B_{m,M}\}$ of $V_{m,M}$. The set $\bar{B}_{m,M}$ is closed under convex combinations with rational coefficients.

For any strictly increasing sequence of integers $d=(d_0,d_1,\dots,
  d_s)$, the matrix $\pi(d)$ defined by
$$
\pi(d)_{i,j} =
\begin{cases}(-1)^{i+1} \prod_{\substack{k \neq i \\k \neq 0}}
  \frac{d_k-d_0}{d_k-d_i}
  \text{ if $j = d_i$,} \\
  0 \text{ if $j \neq d_i$},
\end{cases}
$$
is called a pure diagram.

For any two strictly increasing sequences  of integers $m=(m_0,\ldots, m_s)$ and $M=(M_0,\ldots, M_s)$ we denote by $\Pi_{m,M}$ the set of all pure diagrams in $V_{m,M}$. Note that $\Pi_{m,M}$ is just the set of pure diagrams $\pi(d)$ with $m_i\leq d_i\leq M_i$ for all $i$.

Now we have the following

\begin{Conjecture}[Boij, S\"oderberg]
\label{BS} $\bar{B}_{m,M}$ is the convex hull of $\Pi_{m,M}$.
\end{Conjecture}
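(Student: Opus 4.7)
The plan is to prove the two set-theoretic inclusions in $\bar{B}_{m,M}=\mathrm{conv}(\Pi_{m,M})$ separately. For the inclusion $\mathrm{conv}(\Pi_{m,M}) \subseteq \bar{B}_{m,M}$, it suffices, since $\bar{B}_{m,M}$ is closed under rational convex combinations, to realize each pure diagram $\pi(d)\in\Pi_{m,M}$ as the normalized Betti diagram of some graded Cohen-Macaulay $S$-module. The Herzog-K\"uhl formula used near the end of the proof of Theorem~\ref{main} already shows that the Betti numbers of any Cohen-Macaulay module with pure resolution of type $d$ are forced to be proportional to the rational entries of $\pi(d)$; so the only real content is the \emph{existence} of a module with pure resolution of type $d$ for each strictly increasing sequence with $m_i\le d_i\le M_i$. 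In characteristic zero this can be arranged by an Eisenbud-Fl\o ystad-Weyman style equivariant construction, and alternative routes via iterated mapping cones of Koszul-type complexes should work more generally.

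For the reverse inclusion $\bar{B}_{m,M} \subseteq \mathrm{conv}(\Pi_{m,M})$, I would set up a greedy decomposition. Given $(\bar{\beta}_{ij}) \in \bar{B}_{m,M}$, define its \emph{top strand} $d=(d_0,\ldots,d_s)$ by $d_i = \min\{j : \bar{\beta}_{ij}\neq 0\}$. The first step is to show $d$ is strictly increasing and satisfies $m_i\le d_i\le M_i$, so that $\pi(d)\in\Pi_{m,M}$. Then let $c$ be the largest rational number for which $(\bar{\beta}_{ij}) - c\,\pi(d)$ is entrywise nonnegative; this maximum is attained at some position $(i_0,d_{i_0})$, which becomes zero in the residual diagram. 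Renormalizing by $1/(1-c)$ and iterating, the top strand advances strictly in the componentwise partial order on $\prod_i [m_i,M_i]$ at each step, so the procedure terminates in at most $1+\sum_i(M_i-m_i)$ steps and expresses $(\bar{\beta}_{ij})$ as a rational convex combination of pure diagrams in $\Pi_{m,M}$.

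The main obstacle is to verify the ``strictly increasing top strand'' claim at every stage of the iteration. Conditions (a) and (b) are manifestly preserved under $\bar\beta \mapsto \bar\beta - c\,\pi(d)$ because they are linear and because $c$ was chosen to keep nonnegativity. What is not obvious is that after the subtraction the new top strand remains strictly increasing: a collision $d_i' = d_{i+1}'$ would have to be ruled out by combining the linear relations of (a) with nonnegativity, and this is precisely where the sharpest combinatorial input is needed. I expect a clean resolution of this difficulty to come from dualizing to cohomology tables, where the analogous greedy step becomes more manifestly well-posed; absent that, one is forced into a direct analysis of which linear combinations of pure diagrams can exhibit the required nonnegativity patterns, which appears to be the principal technical hurdle in carrying out the program sketched above.
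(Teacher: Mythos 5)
First, a point of calibration: the paper does not prove this statement at all. It is quoted verbatim as a conjecture of Boij and S\"oderberg (their preprint \cite{BS}) and is used in this paper only as a hypothesis, namely in Proposition~\ref{justify} (``Conjecture~\ref{BS} implies Conjecture~\ref{1}''), with the Corollary invoking only the special cases (codimension two, Gorenstein codimension three) that Boij and S\"oderberg themselves verified. So there is no proof in the paper to compare yours against, and anything you write has to stand on its own.

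Judged on its own, your proposal is a program, not a proof, and both halves have genuine gaps. For $\mathrm{conv}(\Pi_{m,M})\subseteq \bar{B}_{m,M}$ the whole burden is the existence of a Cohen--Macaulay module with pure resolution of type $d$ for \emph{every} strictly increasing $d$ with $m_i\le d_i\le M_i$: the Herzog--K\"uhl result \cite{HK} used in Theorem~\ref{main} only says what the Betti numbers must be \emph{if} such a module exists, and at the level of this paper existence is known only for special sequences (Eagon--Northcott/Buchsbaum--Rim type complexes, linear resolutions, etc.). Waving at ``an Eisenbud--Fl\o ystad--Weyman style equivariant construction'' is citing the missing theorem, not proving it, and it is restricted to characteristic zero besides. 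For the reverse inclusion, your greedy decomposition has a fatal unproved step, which you yourself identify: after subtracting $c\,\pi(d)$ you only know the residual is a nonnegative element of $V_{m,M}$ satisfying the linear relations (a) and the support condition (b). But nonnegativity together with (a) and (b) does not characterize (convex combinations of) normalized Betti diagrams, and the cone spanned by the pure diagrams is cut out by further nontrivial inequalities; so nothing in your setup guarantees that the residual has a strictly increasing top strand, stays inside the relevant cone, or eventually lands on a pure diagram. The termination count is the easy part; the nonnegativity/strand control at every stage is exactly the content of the conjecture. (For the record, this is how the conjecture was eventually settled by Eisenbud and Schreyer: pure resolutions were constructed for all degree sequences in all characteristics, and the needed facet inequalities for the cone were produced by pairing Betti tables against cohomology tables of supernatural vector bundles --- precisely the two ingredients your sketch presupposes.)
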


If it happens that $N$ is a graded Cohen-Macaulay module generated in degree $0$ with pure resolution of type $d=(d_1,\ldots,d_s)$, then the normalized Betti diagram of $N$ is just $\pi(d)$ (with $d_0=0$),  as follows from \cite{HK} (see also \cite{BS}). Hence we define for $i=0,\ldots,n-s$,  the Hilbert coefficients of a pure diagram $\pi(d)$ for which  $d_0=0$  as
\[
e_i(\pi(d))= \frac{d_1d_2\cdots d_s}{(s+i)!}h_i(d_1,\ldots,d_s),
\]
no matter whether or not $d$ is the type of a Cohen-Macaulay module with pure resolution.

The following observation justifies our conjecture.

\begin{Proposition}
\label{justify} Conjecture \ref{BS} implies Conjecture \ref{1}.
\end{Proposition}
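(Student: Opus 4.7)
The strategy is to translate the Boij--S\"oderberg decomposition of the Betti diagram of $N$ into a linear identity among Hilbert coefficients, and then to reduce Conjecture~\ref{1} to a monotonicity statement about $d\mapsto e_i(\pi(d))$.

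Assume Conjecture~\ref{BS}. The normalized Betti diagram $(\bar\beta_{ij})=(\beta_{ij}/\beta_0)$ of $N$ lies in $\bar B_{m,M}$ and hence admits a convex combination
\[
(\bar\beta_{ij})=\sum_\alpha \lambda_\alpha\,\pi\bigl(d^{(\alpha)}\bigr),\qquad \lambda_\alpha\ge 0,\ \ \sum_\alpha\lambda_\alpha=1,
\]
in which each strictly increasing sequence $d^{(\alpha)}=(0,d^{(\alpha)}_1,\ldots,d^{(\alpha)}_s)$ satisfies $m_j\le d^{(\alpha)}_j\le M_j$. Since the assignment $(\beta_{ij})\mapsto P(t):=\sum_{i,j}(-1)^i\beta_{ij}t^j$ is linear and, by~(\ref{ei}), each $e_i$ depends linearly on $P$, averaging yields
\[
e_i(N)/\beta_0 \;=\; \sum_\alpha \lambda_\alpha\,e_i\bigl(\pi(d^{(\alpha)})\bigr),
\]
where by Theorem~\ref{main} each summand equals $(d_1\cdots d_s/(s+i)!)\,h_i(d_1,\ldots,d_s)$. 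Since $\sum_\alpha\lambda_\alpha=1$, Conjecture~\ref{1} will follow once I establish the monotonicity
\[
e_i(\pi(m))\;\le\;e_i(\pi(d))\;\le\;e_i(\pi(M))
\]
for every strictly increasing integer sequence $d$ with $m_j\le d_j\le M_j$: each summand then lies in the interval $[e_i(\pi(m)),e_i(\pi(M))]$ and so does the convex combination.

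The remaining work is this monotonicity, which I plan to prove discretely: if $d'$ is obtained from $d$ by raising a single coordinate $d_k$ by one and $d'$ remains strictly increasing, then $e_i(\pi(d))\le e_i(\pi(d'))$. Since $m$ and $M$ can be connected by a chain of such admissible unit moves (e.g.\ raise $d_s$ up to $M_s$ first, then $d_{s-1}$, and so on), chaining yields the required two-sided bound. Along such a chain one has $d_j\ge m_j\ge j$, the latter being forced by the minimality of the graded free resolution of $N$; hence each factor $d_{j_k}-(j_k+k-1)$ entering $h_i(d)$ is bounded below by $-(k-1)$. For $i=0,1$ the unit-move inequality is transparent from the explicit formulas displayed after Theorem~\ref{main}, since both $d_1\cdots d_s$ and $\sum_l(d_l-l)$ are weakly increased by an admissible move. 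The principal obstacle is $i\ge 2$, where individual summands of $h_i(d)$ may be negative; the plan is to expand $e_i(\pi(d'))-e_i(\pi(d))$ from the formula in Theorem~\ref{main}, separate the contribution from the change of the leading product $d_1\cdots d_s$ from that of the change of $h_i(d)$, and regroup so that the constraints $d_j\ge j$ make non-negativity manifest---intuitively, the positive contribution from the leading product dominates any negativity coming from the change of $h_i$.
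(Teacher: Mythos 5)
Your reduction is exactly the paper's: decompose the normalized Betti diagram as a convex combination of pure diagrams via Conjecture~\ref{BS}, use linearity of the $e_i$ in the Betti data to get $e_i(N)/\beta_0=\sum_\alpha\lambda_\alpha e_i(\pi(d^{(\alpha)}))$, and then conclude from the monotonicity $e_i(\pi(m))\le e_i(\pi(d))\le e_i(\pi(M))$. But the monotonicity for $i\ge 2$ is the whole content of the proposition beyond bookkeeping, and you leave it as a ``plan'': you only record that each factor $d_{j_k}-(j_k+k-1)$ is bounded below by $-(k-1)$ and hope that the growth of the leading product $d_1\cdots d_s$ dominates whatever negativity appears in $h_i$. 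That domination is not established, and it is not how the difficulty resolves; as stated, your argument has a genuine gap precisely at the point where the paper does its real work.

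The missing idea is that for a \emph{strictly increasing} sequence $d$ with $d_j\ge j$, no negativity ever occurs: each summand $\prod_{k=1}^i\bigl(d_{j_k}-(j_k+k-1)\bigr)$ of $h_i(d)$ is either zero or has all factors strictly positive. The paper proves this by contradiction: if $\ell$ is the least index with $d_{j_\ell}-(j_\ell+\ell-1)<0$ while the product is nonzero, then $\ell>1$ and $d_{j_{\ell-1}}-(j_{\ell-1}+\ell-2)>0$, and subtracting gives $j_\ell-j_{\ell-1}\ge d_{j_\ell}-d_{j_{\ell-1}}+1$, contradicting $d_{j_\ell}-d_{j_{\ell-1}}\ge j_\ell-j_{\ell-1}$ which holds because $d_1<d_2<\cdots<d_s$ and $j_{\ell-1}\le j_\ell$. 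Note that this uses the strict increase of $d$ (a property of every pure diagram in $\Pi_{m,M}$), not merely $d_j\ge j$, which is why your weaker lower bound on the factors cannot close the argument. Once each summand is known to be either zero or a product of positive, coordinatewise nondecreasing factors, the inequality $e_i(\pi(d))\le e_i(\pi(d'))$ for $d_j\le d_j'$ is immediate term by term (no chain of unit moves is needed), and your convex-combination step then finishes the proof exactly as in the paper.
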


\begin{proof} Let $N$ is a graded Cohen-Macaulay module of codimension $s$  generated in degree $0$, and let $D$ be the normalized Betti diagram of $N$. Let $m=(m_1,\ldots,m_s)$ and $M=(M_1,\ldots,M_s)$ be the sequences of minimal and maximal shifts of $D$. Assuming Conjecture~\ref{BS} we have
\[
D=\sum_{\pi(d)\in \Pi_{m,M}}c_{\pi(d)}\pi(d) \quad \text{with}\quad c_{\pi(d)}\in\QQ \quad \text{and} \sum_{\pi(d)\in \Pi_{m,M}}c_{\pi(d)}=1.
\]
It follows that
\begin{eqnarray}
\label{ein}
e_i(N)=\beta_0\cdot\sum_{\pi(d)\in \Pi_{m,M}}c_{\pi(d)}e_i(\pi(d))
\end{eqnarray}
Let  $\prod_{k=1}^i(d_{j_k}-(j_k+k-1))$ be one of the summands in  $h_i(d)$. We claim that either $\prod_{k=1}^i(d_{j_k}-(j_k+k-1))=0$, or else $d_{j_k}-(j_k+k-1)>0$ for $k=1,\ldots,i$. The claim will then imply that \begin{eqnarray}
\label{eipi}
e_i(\pi(d))\leq e_i(\pi(d'))
\end{eqnarray}
whenever we have $d_i\leq d_i'$  for  $i=1,\ldots,s.$

In order to prove the claim  suppose that $\prod_{k=1}^i(d_{j_k}-(j_k+k-1))\neq 0$.  Since $d_i\geq i$ for all $i$, we must then have that $d_{j_1}-j_1> 0$.  Assume that not all factors $d_{j_k}-(j_k+k-1)$ are positive and let
$\ell$ be the smallest integer with $d_{j_\ell}-(j_\ell+\ell-1)<0$. Then $\ell>1$ and  $d_{j_{\ell-1}}-(j_{\ell-1}+\ell-2)>0$. It follows that
\[
d_{j_{\ell-1}}-(j_{\ell-1}+\ell-2)-(d_{j_\ell}-(j_\ell+\ell-1))\geq 2,
\]
equivalently
\[
j_\ell-j_{\ell-1}\geq d_{j_\ell}-d_{j_{\ell-1}}+1.
\]
This is a contradiction, since $d_1<d_2<\cdots < d_s$.

Now (\ref{ein}) and (\ref{eipi}) imply that
\begin{eqnarray*}
e_i(\pi(m))&\leq &\min\{e_i(\pi(d))\:\; \pi(d)\in \Pi_{m,M}\}\\
&\leq &\frac{e_i(N)}{\beta_0}\leq \max\{e_i(\pi(d))\:\; \pi(d)\in \Pi_{m,M}\}=e_i(\pi(M)),
\end{eqnarray*}
as desired.
\end{proof}

Conjecture \ref{BS} is proved in several cases by Boij and S\"oderberg, and hence also proves our conjecture in these cases. We single out two such cases.

\begin{Corollary} Let  $N$ be  a Cohen-Macaulay $S$-module of codimension two, generated
  in degree $0$, or let  $N=S/I$ where $I$ is a Gorenstein ideal of codimension 3.  Then the bounds for the Hilbert coefficients given in Conjecture \ref{1} hold.
\end{Corollary}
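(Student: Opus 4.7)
The proof is essentially a direct appeal to Proposition \ref{justify} combined with known cases of the Boij--S\"oderberg Conjecture \ref{BS}. The plan is as follows.

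First, I would observe that in both situations the hypothesis of Proposition~\ref{justify} is satisfied: $N$ is a finitely generated graded Cohen--Macaulay $S$-module generated in degree $0$ (in the Gorenstein case $N=S/I$ with $I$ of codimension~$3$, so $N$ is cyclic and generated in degree~$0$, and it is Cohen--Macaulay since $S/I$ is Gorenstein). Therefore, once we know that Conjecture~\ref{BS} holds for the pair $(m,M)$ of minimal and maximal shifts of the normalized Betti diagram of $N$, the bounds of Conjecture~\ref{1} follow for $N$.

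Next, I would invoke the two cases of Conjecture~\ref{BS} established by Boij and S\"oderberg in \cite{BS}: (i) the case of codimension two, which asserts that any normalized Betti diagram of a graded Cohen--Macaulay module of codimension~$2$ lies in the convex hull of the pure diagrams $\pi(d)\in\Pi_{m,M}$; and (ii) the case of Gorenstein ideals of codimension~$3$, where the corresponding decomposition into pure diagrams is also proved. In both situations the decomposition
\[
D=\sum_{\pi(d)\in \Pi_{m,M}}c_{\pi(d)}\pi(d), \qquad c_{\pi(d)}\in\QQ_{\geq 0},\ \sum c_{\pi(d)}=1,
\]
used in the proof of Proposition~\ref{justify} is available.

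Applying Proposition~\ref{justify} then yields, for $i=0,1,\ldots,n-s$,
\[
\beta_0\,e_i(\pi(m))\leq e_i(N)\leq \beta_0\,e_i(\pi(M)),
\]
which are exactly the inequalities of Conjecture~\ref{1}. The only point requiring a moment of care is the Gorenstein case: here $s=3$ and one must verify that the minimal and maximal shifts $m=(m_1,m_2,m_3)$ and $M=(M_1,M_2,M_3)$ coming from the symmetric Betti diagram of $S/I$ fit into the setup of $V_{m,M}$ used by Boij--S\"oderberg. This is immediate from the fact that the Gorenstein duality forces the allowed shift ranges to be symmetric, and the decomposition they construct respects this. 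With these observations in place the corollary is an immediate consequence of Proposition~\ref{justify}, so no further computation is needed.
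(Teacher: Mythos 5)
Your proposal is correct and is essentially the paper's own argument: the paper proves the corollary precisely by citing the cases of Conjecture~\ref{BS} established by Boij and S\"oderberg (codimension two Cohen--Macaulay modules and codimension three Gorenstein ideals) and then applying Proposition~\ref{justify}. Your additional remarks (cyclicity of $S/I$, symmetry of the Gorenstein Betti diagram) are harmless elaborations of the same route.
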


\newpage

\end{document}